\numberwithin{equation}{section}
\newtheorem{theorem}{Theorem}[section]
\newtheorem{proposition}[theorem]{Proposition}
\newtheorem{lemma}[theorem]{Lemma}
\newtheorem{corollary}[theorem]{Corollary}
\theoremstyle{definition}
\newtheorem{definition}[theorem]{Definition}
\newtheorem{remark}[theorem]{Remark}
\newcommand{\EE}{\mathbb E}
\newcommand{\RR}{\mathbb R}
\newcommand{\bX}{\mathbf X}
\DeclareMathOperator{\vol}{vol}
\DeclareMathOperator{\Pois}{Pois}
\newcommand{\eps}{\varepsilon}
\title{Lower bounds for sphere packing in arbitrary norms}
\author{Carl Schildkraut}
\begin{document}

\begin{abstract}
We show that in any $d$-dimensional real normed space, unit balls can be packed with density at least
\[\frac{(1-o(1))d\log d}{2^{d+1}},\]
improving a result of Schmidt from 1958 by a logarithmic factor and generalizing the recent result of Campos, Jenssen, Michelen, and Sahasrabudhe in the $\ell_2$ norm. Our main tools are the graph-theoretic result used in the $\ell_2$ construction and volume bounds from convex geometry due to Petty and Schmuckenschl\"ager.
\end{abstract}

\maketitle

\section{Introduction}\label{sec:intro}

The question of how densely congruent spheres may be packed in $d$-dimensional space is a classical one. The ``trivial bound,'' i.e.~that at least a $2^{-d}$ proportion of space may be covered, was first improved by a factor of $2$ by Minkowski \cite{Minkowski1905} in 1905. The first asymptotic improvement (i.e.,~by more than a constant factor) on the trivial bound was due to Rogers \cite{Rogers47}, who gave a lower bound of the form $c(1-o(1))d2^{-d}$ for $c=2/e$. This constant was improved to $c=2$ in 1992 by Ball \cite{Ball} and to $c=65963$ by Venkatesh in 2012 \cite{Venkatesh}. Very recently, Campos, Jenssen, Michelen, and Sahasrabudhe \cite{CJMS} gave an asymptotic improvement, replacing the constant $c$ by $\frac12\log d$.\footnote{Here and throughout, $\log$ denotes the natural logarithm.}

\vspace{2mm}

Nothing about the question in the previous paragraph is specific to the Euclidean norm. Endowing $\RR^d$ with an arbitrary norm $\lVert\cdot\rVert$, one may attempt to pack as densely as possible copies of the unit ball $B_{\lVert\cdot\rVert}(0,1)=\{x\in\RR^d : \|x\|\leq 1\}$. Lower bounds of the form $c(1-o(1))d2^{-d}$ are known in this setting as well, with the first such bound given by Schmidt \cite{Schmidt58} in 1958. The constant $c$ was improved first by Rogers \cite{Rogers58}, and then to $c=\frac12\log 2$ by Schmidt \cite{Schmidt63}. In contrast to the $\ell_2$ problem, the constant in this more general setting has not been improved since 1963. In particular, the improvements due to Ball and Venkatesh in the $\ell_2$ case crucially use the symmetric structure of the $\ell_2$ sphere; they do not work naturally in more generality.

\vspace{2mm}

Our main result extends the result of \cite{CJMS} to arbitrary norms, with the same asymptotic improvement and the same constant factor. This improves upon the previous best-known bound by a logarithmic factor. We will phrase our results in terms of centrally symmetric convex bodies (i.e.,~centrally symmetric compact convex sets with nonempty interior), as our arguments are more natural in this language.\footnote{Convex bodies symmetric about $0$ and norms on $\RR^d$ are in bijection: the unit ball $\{x\in\RR^d : \|x\|\leq 1\}$ has nonempty interior, is centrally symmetric, is compact, and is convex, while if $K$ has nonempty interior and is centrally symmetric, compact, and convex, then $\|x\|=\inf\{\lambda\geq 0 : x\in\lambda K\}$ is a norm on $\RR^d$.} Given a compact set $K\subset\RR^d$ with nonempty interior, let $\delta_T(K)$ be the maximum density of a packing of translates of $K$ in $\RR^d$. 

\begin{theorem}\label{thm:main} For each compact, centrally symmetric convex body $K\subset\RR^d$,
\[\delta_T(K)\geq (1-o(1))\frac{d\log d}{2^{d+1}},\]
where the $o(1)$ term tends to zero with the dimension $d$, irrespective of the particular body $K$.    
\end{theorem}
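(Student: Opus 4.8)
The plan is to produce a dense packing as a large independent set in a \emph{difference graph} of a random point configuration, following the strategy used by \cite{CJMS} in the Euclidean case. Fix a large lattice $\Lambda_0\subset\RR^d$ whose shortest vector (measured in $\lVert\cdot\rVert_K$) exceeds the diameter of $2K$, and work on the torus $\mathbb{T}=\RR^d/\Lambda_0$, of volume $V=\vol(\mathbb{T})$. Let $X$ be a Poisson point process on $\mathbb{T}$ of intensity $\lambda$, and form the graph $G$ on vertex set $X$ in which $x\sim y$ exactly when $x-y\in 2K$. Since $K=-K$ we have $K-K=2K$, so two translates $x+K$ and $y+K$ are disjoint precisely when $x$ and $y$ are non-adjacent in $G$; hence every independent set $I$ of $G$ is the centre set of a packing of $|I|$ translates of $K$ in $\mathbb{T}$, of density $|I|\vol(K)/V$, and lifting to $\RR^d$ (then letting $V\to\infty$) gives a periodic packing of the same density. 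So it is enough to find, for a good choice of $\lambda$, an independent set with $|I|\ge(1-o(1))\,V\cdot\frac{d\log d}{2^{d+1}\vol(K)}$.

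Write $\Delta:=\lambda\vol(2K)=\lambda 2^d\vol(K)$; then $\EE|X|=\lambda V$, and (by Poisson concentration, valid since we will take $\Delta\to\infty$) the maximum degree of $G$ is $(1+o(1))\Delta$ with high probability. The engine is the independence-number theorem of \cite{CJMS} for locally sparse graphs: provided the neighbourhoods of $G$ are sufficiently sparse in the relevant quantitative sense, it gives $\alpha(G)\ge(1-o(1))\frac{|X|\log\Delta}{\Delta}$, which, combined with the above, produces a packing of density at least $(1-o(1))\frac{\log\Delta}{2^d}$. Thus the theorem reduces to two points: (i) verifying the sparsity hypothesis, and (ii) taking $\lambda$ as large as (i) permits, with the target $\log\Delta=(1-o(1))\frac d2\log d$, i.e.\ $\Delta=d^{(1/2-o(1))d}$ --- the exponent $\tfrac12 d$, essentially $\log$ of the reciprocal volume of a Euclidean ball, being what yields the constant $\tfrac12\log d$.

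Point (i) is where convex geometry enters, and is the heart of the matter. The local quantities that control the hypothesis of the \cite{CJMS} theorem --- the number of edges spanned by a neighbourhood of $G$, together with the associated codegree quantities --- are, up to lower-order factors, governed by the intersection volumes $\vol\bigl(2K\cap(2K+x)\bigr)$ for displacements $x$ ranging over $2K$; for instance a neighbourhood spans about $\tfrac12\beta(K)\Delta^2$ edges, where
\[\beta(K)\;=\;\frac{1}{\vol(2K)^2}\int_{2K}\vol\bigl(2K\cap(2K+x)\bigr)\,dx\;=\;\Pr_{a,b\sim\Unif(K)}\bigl[\,a-b\in K\,\bigr].\]
One needs an upper bound on $\beta(K)$ (and its codegree analogue) that is exponentially small in $d$ and \emph{uniform over all centrally symmetric convex bodies $K$}, and this is exactly what the volume estimates of Petty and Schmuckenschl\"ager supply: one reduces, by a convex-geometric comparison of Petty, the bound on the relevant intersection volume to the case of an ellipsoid, and then invokes Schmuckenschl\"ager's explicit estimates there --- the ellipsoidal (equivalently $\ell_2$) case being precisely the one computed directly in \cite{CJMS}. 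This certifies that no body is essentially worse than the Euclidean ball, giving a bound of the shape $\beta(K)\le\mathrm{poly}(d)\,(\sqrt3/2)^{d}$ for every $K$. With such a bound, the sparsity hypothesis of the \cite{CJMS} theorem holds up to $\Delta=d^{(1/2-o(1))d}$, and taking $\Delta$ at this value yields packing density at least $(1-o(1))\frac{\log\Delta}{2^d}=(1-o(1))\frac{d\log d}{2^{d+1}}$, with the $o(1)$ depending only on $d$.

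The main obstacle is the uniform convex-geometric estimate just described: unlike \cite{CJMS}, who could evaluate the intersection volumes directly via caps on the sphere, for an arbitrary norm one must locate the correct comparison inequality showing the ellipsoid is extremal (or at least not beaten), so that the graph-theoretic machinery of \cite{CJMS} can be applied verbatim with a body-independent input. A secondary technical point is calibration: one must check that the exponential smallness of $\beta(K)$ is exactly strong enough to push $\Delta$ to $d^{(1/2-o(1))d}$ while keeping the error term in the \cite{CJMS} independence theorem $o(1)$, so that both the exponent of $d\log d$ and the constant $\tfrac12$ coincide with those of the Euclidean result.
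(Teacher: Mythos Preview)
Your overall architecture matches the paper's exactly: Poisson process, the difference graph $G(X,K)$, the CJMS independence theorem, and Petty plus Schmuckenschl\"ager for the geometry. However, the geometric step, which you correctly flag as the heart of the matter, is misidentified in two ways.

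First, the quantity that must be controlled is not the \emph{average} $\beta(K)$ but a \emph{level set} of the intersection-volume function. Theorem~1.3 of \cite{CJMS} demands a bound on the \emph{maximum} codegree, and the codegree of a pair $x,y$ is essentially $\lambda\vol\bigl(2K\cap(2K+(y-x))\bigr)$; if $y-x$ happens to be small this is close to $\Delta$ regardless of how small $\beta(K)$ is. The paper therefore works instead with $I_K=\{x:\vol(K\cap(K+x))>d^{-10}\}$, deletes from $\bX$ every point that lies within $2I_K$ of another (this is the set $\bX_2$ in the paper's Lemma~2.1), and shows that $\vol(I_K)$ is small enough that this deletion removes only an $O(1/d)$ fraction of points while forcing all remaining codegrees below $d^{-9}\Delta$. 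Your sketch never introduces this deletion and so cannot meet the codegree hypothesis.

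Second, the roles of the two convex-geometric inputs are inverted. Schmuckenschl\"ager's theorem is not an ``explicit estimate for the ellipsoid''; it is the general statement that for \emph{any} symmetric convex $K$ one has $\{x:\vol(K\cap(K+x))>\delta\}\subset\log(1/\delta)\,\Pi^*K$, where $\Pi^*K$ is the polar projection body. Petty's projection inequality then bounds $\vol(\Pi^*K)\le\vol(\Pi^*B)$ for $B$ the Euclidean ball of the same volume, and an elementary computation gives $\vol(\Pi^*B)\le(2\pi/d)^{d/2}$. Composing these yields $\vol(I_K)\le(C\log^2 d/d)^{d/2}$, hence $\Delta_K:=(d\vol I_K)^{-1}\ge d^{(1/2-o(1))d}$, which is exactly the threshold you want. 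So the pieces you named are the right ones, but the order is Schmuckenschl\"ager $\to$ Petty $\to$ ball computation, applied to the level set $I_K$ rather than to the integral $\beta(K)$.
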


We quickly mention the question of finding upper bounds. In the $\ell_2$ case, the best upper bound is of the shape $2^{-(0.599\ldots-o(1))d}$, due to Kabatjanski\u{\i} and Leven\v{s}te\u{\i}n \cite{KL} with constant-factor improvements by Cohn and Zhao \cite{CZ} and Sardari and Zargar \cite{SZ}. Some bodies $K$ (such as the hypercube) can of course be packed much more tightly, so there is no general-purpose upper bound in the setting of \cref{thm:main}. We are not aware of any centrally symmetric convex body $K\subset\RR^d$ whose translational packing density is known to be strictly less than the best known upper bound for the $\ell_2$ unit ball.

\vspace{2mm}

It is also interesting to consider translational packings of shapes which are not centrally symmetric. Via a result of Rogers and Shephard \cite{RS}, we may the following from \cref{thm:main}. 

\begin{corollary}\label{cor:main} For each compact, convex body $K\subset\RR^d$,
\[\delta_T(K)\geq (1-o(1))\frac{\sqrt\pi d^{3/2}\log d}{2^{2d+1}}.\]
\end{corollary}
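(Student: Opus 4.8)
The plan is to deduce \cref{cor:main} from \cref{thm:main} by replacing an arbitrary convex body $K$ with its centrally symmetrized difference body $L:=\tfrac12(K-K)$, and then to control the resulting loss in volume via the Rogers--Shephard inequality.

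First I would record the elementary fact that a set of centers $X\subset\RR^d$ yields a packing by translates of $K$ if and only if it yields a packing by translates of $L$. Indeed, the interiors of $K+x$ and $K+y$ meet precisely when $x-y\in\operatorname{int}(K-K)$; and since $K-K$ is convex and symmetric about the origin, one has $L-L=\tfrac12(K-K)+\tfrac12(K-K)=K-K$, so the interiors of $L+x$ and $L+y$ meet under exactly the same condition on $x-y$. Hence the admissible configurations of centers are literally the same for $K$ and for $L$. Comparing packing densities (defined as usual as the limiting fraction of a large box that is covered), for any fixed admissible $X$ the density of $\{K+x:x\in X\}$ equals $\tfrac{\vol(K)}{\vol(L)}$ times that of $\{L+x:x\in X\}$, and taking the supremum over $X$ gives $\delta_T(K)=\delta_T(L)\cdot\vol(K)/\vol(L)$.

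Next I would apply \cref{thm:main} to $L$, which is a compact, centrally symmetric convex body (it has nonempty interior because $K$ does), obtaining $\delta_T(L)\ge(1-o(1))\,d\log d/2^{d+1}$. It then remains to bound $\vol(K)/\vol(L)=2^d\,\vol(K)/\vol(K-K)$ from below. The Rogers--Shephard inequality \cite{RS} states $\vol(K-K)\le\binom{2d}{d}\vol(K)$, so $\vol(K)/\vol(L)\ge 2^d/\binom{2d}{d}$, and Stirling's formula gives $\binom{2d}{d}=(1+o(1))\,4^d/\sqrt{\pi d}$. Combining,
\[\delta_T(K)\;\ge\;(1-o(1))\,\frac{d\log d}{2^{d+1}}\cdot\frac{2^d}{\binom{2d}{d}}\;=\;(1-o(1))\,\frac{\sqrt\pi\,d^{3/2}\log d}{2^{2d+1}},\]
which is the claimed bound; note the $o(1)$ remains uniform in $K$ since the $o(1)$ in \cref{thm:main} is.

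Each step is either a direct citation or a one-line computation, so I do not expect a real obstacle. The only point meriting mild care is the density bookkeeping in the first step: one should check that the identity $\delta_T(K)=\delta_T(L)\,\vol(K)/\vol(L)$ holds at the level of the limiting definition of packing density (where boundary effects in a large box contribute only to the $o(1)$), not merely for periodic packings. This is standard, and I would include it only briefly.
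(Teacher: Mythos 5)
Your proposal is correct and follows essentially the same route as the paper: symmetrize to $L=\tfrac12(K-K)$, observe (as in Minkowski's lemma, \cref{lem:non-sym}) that admissible center sets for $K$ and $L$ coincide so that $\delta_T(K)=\tfrac{\vol K}{\vol L}\delta_T(L)$, bound $\vol L/\vol K$ by the Rogers--Shephard inequality, apply \cref{thm:main} to $L$, and finish with Stirling for $\binom{2d}{d}$. (Incidentally, your inequality signs in the final chain are the correct $\geq$, whereas the paper's displayed chain has a sign typo writing $\leq$.)
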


We now make some comments about our approach. Historically, most of the arguments achieving dense packings of convex shapes have constructed \emph{lattice} packings, i.e.~packings in which the set of centers forms a lattice in $\RR^d$. In contrast, the packing given in \cite{CJMS} is \emph{amorphous}, lacking any global structure.  In the case of the $\ell_2$ unit ball, densities asymptotic to $cd2^{-d}$ were first achieved for amorphous packings in 2004 by Krivelevich, Litsyn, and Vardy \cite{KLV}, with constant-factor improvements by Jenssen, Joos, and Perkins \cite{JJP} and Fern\'andez, Kim, Liu, and Pikhurko \cite{FKLP}. For $\ell_p$ unit balls and some generalizations (\emph{superballs}), Xie and Ge \cite{XG} recently extended the techniques of \cite{JJP} to give lower bounds of the form $cd2^{-d}$. To our knowledge, ours is the first work achieving densities asymptotically greater than $2^{-d}$ for other convex bodies using amorphous packings.

\vspace{2mm}

In \cite{KLV}, the analysis of the amorphous packing is via tools from graph theory. Specifically, the authors use a result of Ajtai, Koml\'os, and Szemer\'edi \cite{AKS} that a graph with few triangles must have a large independent set. The improvement of Campos, Jenssen, Michelen, and Sahasrabudhe comes from a novel and more refined graph-theoretic result, which allows them to obtain an extra factor of $\log d$ in the size of the independent set. The construction we present in proving \cref{thm:main} follows their approach, using the graph-theoretic result of \cite{CJMS} to obtain the same logarithmic improvement.

\vspace{2mm}

To prove \cref{thm:main}, we will first randomly construct a discrete set $X$ of points from which to choose the centers of our copies of $K$. We will then use the graph-theoretic result of \cite{CJMS} to show that a large subset of $X$ may be chosen to be centers of a packing. To satisfy the preconditions of this graph-theoretic result, we will need to ensure that the copies of $K$ centered at points in $X$ do not have very large pairwise intersection. This requires some understanding of the geometry of $K$. We achieve this understanding by considering the \emph{polar projection body} of $K$, a centrally symmetric convex body associated to $K$ (see \cref{def:bodies}). A result of Schmuckenschl\"ager \cite{Schmuckenschlager} reduces our problem to studying the volume of the polar projection body; to do this, we appeal to Petty's projection inequality \cite{Petty}. This argument is carried out in \cref{sec:I-bound}.

\vspace{2mm}

In an earlier version of this article, the geometric argument proceeded instead via a reduction to the Bourgain slicing problem. From this reduction, we attained the main result by appealing to recent work \cite{Chen,KlartagLehec,JLV,Klartag23,Guan,KlartagLehec2} on the slicing problem. This approach is sketched in \cref{rmk:slicing}.

\subsection{Special cases}\label{sec:special}

Some particular choices of $K$ beside the $\ell_2$ unit ball warrant special mention.

\subsubsection*{\texorpdfstring{$\ell_p$}{lp}-balls}\label{sec:lp} Outside of the Euclidean norm, the most natural and well-studied norms on $\RR^d$ are the $\ell_p$ norms for $1\leq p\leq \infty$. For each fixed $p>2$, packings of density exponentially greater than $2^{-d}$ (i.e.~of density at least $(2-\eps_p)^{-d}$ for some $\eps_p>0$) were constructed in 1991 by Elkies, Odlyzko and Rush \cite{EOR}, and improved constructions were presented by Liu and Xing \cite{LX}. In contrast, no such bound is known for any $p\in [1,2]$. Recently, Xie and Ge \cite{XG} gave two new proofs of lower bounds of the form $cd2^{-d}$ for sphere packing in $\ell_p$ spaces with $1<p\leq 2$ (and some generalizations thereof) with techniques from statistical physics, based on the argument of Jenssen, Joos, and Perkins \cite{JJP} in the $\ell_2$ case. \cref{thm:main} improves this bound by a logarithmic factor.

\subsubsection*{Regular simplex}\label{sec:simplex} The regular $d$-simplex $\triangle_d\subset\RR^d$ is an attractive example of a convex shape about which to ask packing problems, as one of the only regular polytopes in large dimensions (along with the cross-polytope and the cube, which are $\ell_p$-balls for $p=1$ and $p=\infty$, respectively). Simplices do not pack translationally very tightly. Rogers and Shephard \cite{RS} give bounds of the form
\begin{equation}\label{eq:RS}
\frac{c_1d^{1/2}}{2^{2d}}\leq \delta_T(\triangle_d)\leq \frac{c_2d^{1/2}}{2^d}
\end{equation}
for constants $c_1,c_2$. Using Schmidt's bound for packing centrally symmetric shapes, the exponent $d^{1/2}$ in the lower bound of \eqref{eq:RS} can be easily improved to $d^{3/2}$; \cref{cor:main} improves this bound by a logarithmic factor.

\begin{remark}\label{rmk:simplex} The upper bound in \eqref{eq:RS} may be improved (for large $d$) to $2^{-c_3d}$ for some $c_3>1.428$. Let $\triangle_d$ be the $d$-simplex in $\RR^{d+1}$ of side length $\sqrt 2$ formed as the convex hull of the standard basis vectors. The key observation is that the symmetric convex body $\frac{\triangle_d-\triangle_d}2$ is exactly the intersection of the $(d+1)$-dimensional cross-polytope, i.e.~the $\ell_1$ unit ball $B_{\ell^1}^{d+1}$ in $\RR^{d+1}$, with the hyperplane $H$ described by $x_1+\cdots+x_{d+1}=0$. We sketch the remainder of the argument: if a set $A\subset H$ serves as the set of centers for a packing of $\frac{\triangle_d-\triangle_d}2$, then $A+\frac2{d+1}(1,1,\ldots,1)$ forms a set of centers for a packing of $B_{\ell^1}^{d+1}$. This gives the inequality
\begin{equation}\label{eq:crosspoly-relation}
\delta_T\left(B_{\ell^1}^{d+1}\right)\geq\frac{\vol(B_{\ell^1}^{d+1})}{\vol\big(\frac{\triangle_d-\triangle_d}2\big)}\delta_T\big(\tfrac{\triangle_d-\triangle_d}2\big)\frac{\sqrt{d+1}}2\geq\frac c{\sqrt d}\delta_T\big(\tfrac{\triangle_d-\triangle_d}2\big)
\end{equation}
for some constant $c>0$ (the $\frac12\sqrt{d+1}$ factor comes from the distinction between the sets of centers for the two convex bodies). A result of Rankin \cite{Rankin} gives an upper bound $\delta_T(B_{\ell^1}^{d+1})\leq 2^{-(0.428\ldots)d}$; combining this with \eqref{eq:crosspoly-relation} gives a similar upper bound on $\delta_T(\frac{\triangle_d-\triangle_d}2)$. This bound, in turn, yields an upper bound of $2^{-(1.428\ldots)d}$ on $\delta_T(\triangle_d)$ using the argument of Rogers and Shephard (see \cref{sec:non-sym}). (We thank Dmitrii Zakharov for bringing the connection between these two convex bodies to our attention.)
\end{remark}

\section{Outline and proof sketch}\label{sec:outline}

In this section, we outline our construction and state the main ingredients necessary for the proof of \cref{thm:main}. We give a proof of \cref{thm:main} assuming these intermediate results. Finally, we deduce \cref{cor:main} from \cref{thm:main}. 

\subsection{Construction of the packing}\label{sec:constr} As in \cite{CJMS}, the packings we construct will be ``amorphous,'' with fairly little structure. Following the argument in \cite{CJMS}, we construct the packing in the following way, depending on a parameter $\Delta$:
\begin{enumerate}
    \item Choose a set $\bX$ of points via a Poisson point process with intensity $2^{-d}\Delta$. Construct a graph $G$ whose vertex set is $\bX$ and whose edges are pairs $(x,y)$ for which $x+K$ and $y+K$ intersect. (Vertices in this graph should have degree approximately $\Delta$.)

    \item Remove from $\bX$ those vertices which have degree much larger than $\Delta$, or which have codegree more than $d^{-9}\Delta$ with some other vertex of $\bX$. Call the resulting graph $G'$.

    \item Find a large independent set $A\subset X$ in $G'$. The points corresponding to the vertices in $A$ form the centers of the copies of $K$ in our packing.
\end{enumerate}

\noindent The resulting packing will have density approximately $2^{-d}\log\Delta$, improving on the previous bound $cd2^{-d}$ as long as $\Delta$ can be taken to be super-exponential in $d$.

\subsection{Structure of the proof}\label{sec:structure} Let $K\subset\RR^d$ be a centrally symmetric convex body of volume $1$. Define the set
\[I_K=\left\{x\in\RR^d : \vol(K\cap (K+x))>d^{-10}\right\},\]
and select $\Delta_K=(d\vol(I_K))^{-1}$. Given a finite set of points $X$, let $G(X,K)$ be the graph with vertex set $X$ and with an edge $xy$ if and only if $x-y\in 2K$, i.e.~if and only if the translates $x+K$ and $y+K$ intersect. The first and second steps in the above outline are accomplished via the following lemma.

\begin{lemma}[cf.~{\cite[Lemma~2.1]{CJMS}}]\label{lem:graph-construction} Suppose $d>10$ and $\Delta$ are so that $d^{12}<\Delta\leq\Delta_K$. Let $\Omega\subset\RR^d$ be bounded and measurable. There exists some finite set of points $X\subset\Omega$ such that
\begin{itemize}
    \item $|X|\geq(1-2/d)(\Delta/2^d)\vol(\Omega)$,
    \item the graph $G(X,K)$ has maximum degree at most $\Delta+\Delta^{2/3}$, and
    \item the graph $G(X,K)$ has maximum codegree at most $d^{-9}\Delta$.
\end{itemize}
\end{lemma}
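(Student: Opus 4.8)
The plan is to realize $X$ as a suitable thinning of a Poisson point process on $\Omega$ of intensity $\lambda = 2^{-d}\Delta$, following the template of \cite[Lemma 2.1]{CJMS}. First I would sample $\mathbf{X}\sim\Pois(\lambda\cdot\vol)$ restricted to $\Omega$. By standard properties of Poisson processes, $\EE|\mathbf{X}| = \lambda\vol(\Omega)$ and $|\mathbf{X}|$ concentrates, so with probability close to $1$ we have $|\mathbf{X}|\geq(1-1/d)\lambda\vol(\Omega)$. The degree of a fixed point $x\in\mathbf{X}$ in $G(\mathbf{X},K)$ is (by the Mecke/Slivnyak formula) distributed as the count of a Poisson process of intensity $\lambda$ on $x+2K$, hence has mean $\lambda\vol(2K) = 2^{-d}\Delta\cdot 2^d\vol(K) = \Delta$, since $\vol(K)=1$. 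The codegree of two points $x,y$ is the number of $\mathbf{X}$-points in $(x+2K)\cap(y+2K)$; this is where the constraint $\Delta\leq\Delta_K$ enters, as I explain below.

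The main work is the pruning in step (2) and verifying the three bullet points survive it. I would let $X$ be the set of $x\in\mathbf{X}$ such that (a) $\deg_{G(\mathbf{X},K)}(x)\leq\Delta+\Delta^{2/3}$ and (b) $x$ has codegree at most $d^{-9}\Delta$ with every other point of $\mathbf{X}$. The degree and codegree bounds for $G(X,K)\subseteq G(\mathbf{X},K)$ are then immediate by monotonicity (removing vertices only decreases degrees and codegrees), so the only real task is to show $|X|$ is still at least $(1-2/d)\lambda\vol(\Omega)$ with positive probability. For this I would bound the expected number of deleted points. For the degree condition: $\deg(x)$ is Poisson with mean $\Delta$, and a Chernoff bound gives $\Pr[\deg(x)>\Delta+\Delta^{2/3}]\leq\exp(-c\Delta^{1/3})$, which is $o(1/d)$ using $\Delta>d^{12}$; integrating over $x$ (again via Mecke) shows the expected number of high-degree points is at most $o(1/d)\cdot\lambda\vol(\Omega)$.

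The delicate point is the codegree deletions. The key geometric input is that $2K\cap(2K+(x-y))$ has volume $>2^d d^{-10}$ only when $x-y\in 2I_K$: indeed $\vol(2K\cap(2K+z)) = 2^d\vol(K\cap(K+z/2))$, so this volume exceeds $2^d d^{-10}$ exactly when $z/2\in I_K$, i.e. $z\in 2I_K$, with $\vol(2I_K)=2^d\vol(I_K)=2^d/(d\Delta_K)$. I would split: if $x-y\notin 2I_K$, the codegree is Poisson with mean $\lambda\vol(2K\cap(2K+(x-y)))\leq 2^{-d}\Delta\cdot 2^d d^{-10} = \Delta d^{-10}$, and a Chernoff bound makes a codegree exceeding $d^{-9}\Delta$ have probability $\exp(-\Omega(d^{-9}\Delta))=o(1/d)$ uniformly (using $\Delta>d^{12}$), so the expected number of such bad pairs incident to a typical vertex is small. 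If instead $x-y\in 2I_K$: conditionally on $x$, the expected number of points $y\in\mathbf{X}$ with $x-y\in 2I_K$ is $\lambda\vol(2I_K) = 2^{-d}\Delta\cdot 2^d/(d\Delta_K) = \Delta/(d\Delta_K)\leq 1/d$, so in expectation at most a $1/d$-fraction of vertices lose a neighbor this way; each such vertex is simply deleted. Combining, the expected number of deleted vertices is at most $(1/d + o(1/d))\lambda\vol(\Omega)$, so $\EE|X|\geq(1-2/d)\lambda\vol(\Omega)$ for $d$ large, and by a second application of concentration (or just because the mean is positive) there is a realization with $|X|\geq(1-2/d)\lambda\vol(\Omega)$ satisfying all three bullets. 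I expect the main obstacle to be bookkeeping the Chernoff tail exponents against the polynomial-in-$d$ error budget and making the "delete one endpoint of each bad edge" argument rigorous without double-counting; the geometry itself is a clean one-line scaling identity relating $\vol(2K\cap(2K+z))$ to the definition of $I_K$.
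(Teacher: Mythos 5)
Your proposal follows essentially the same route as the paper: sample $\bX$ from a Poisson process of intensity $2^{-d}\Delta$, then delete vertices with degree above $\Delta+\Delta^{2/3}$, vertices with a ``close'' neighbor $y$ satisfying $x-y\in 2I_K$ (which is exactly where $\Delta\leq\Delta_K$ is used, via $\lambda\vol(2I_K)\leq 1/d$), and vertices appearing in a high-codegree pair with $x-y\notin 2I_K$ (controlled by a Poisson upper tail, since the conditional mean is $\leq d^{-10}\Delta$), and finally verify the expected number of deletions is at most roughly a $1/d$ fraction so a good realization exists. The bookkeeping you flag as the remaining obstacle does go through smoothly in the paper — it simply bounds $\EE|\bX_1|,\EE|\bX_2|,\EE|S_3|$ by $e^{-d}\EE|\bX|$, $d^{-1}\EE|\bX|$, and $e^{-d}\EE|\bX|$ respectively (with $\Delta>d^{12}$ absorbing the $2^d\Delta$ prefactor in the codegree count), and concludes by an averaging argument on $\EE|X|$.
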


\noindent (Here, the \emph{codegree} of a pair of distinct vertices $(v_1,v_2)$ is the number of vertices $w$ of the graph for which both $v_1w$ and $v_2w$ are edges.) We prove \cref{lem:graph-construction} in \cref{sec:set-select}, following the above outline. To accomplish the third step of the construction, we use the following graph-theoretic result.

\begin{theorem}[{\cite[Theorem~1.3]{CJMS}}]\label{thm:CJMS} Let $G$ be a graph on $n$ vertices with maximum degree at most $\Delta$, and suppose that the maximum codegree of any two vertices in $G$ is at most $\Delta/(2\log\Delta)^7$. Then there exists an independent set in $G$ of size at least
\[(1-o(1))\frac{n\log\Delta}\Delta,\]
where the $o(1)$ tends to zero as $\Delta\to\infty$.
\end{theorem}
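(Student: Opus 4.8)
Since \cref{thm:CJMS} is quoted verbatim from \cite{CJMS}, one may simply invoke it; for orientation, let me describe the approach I would take, which is the semi-random (``nibble'') method. The plan is to build the independent set over many rounds while maintaining a working induced subgraph on a vertex set $V_i$ of maximum degree at most $D_i$, starting from $V_0=V(G)$ and $D_0=\Delta$. In round $i$: activate each vertex of $V_i$ independently with probability $p_i=\eps/D_i$ for a small fixed $\eps>0$; add to the independent set every activated vertex having no activated neighbor; then delete the newly added vertices together with all of their neighbors to obtain $V_{i+1}$. (An occupancy-style analysis of the hard-core model is a possible alternative, but the nibble seems the most direct route to the sharp constant.)

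Assuming approximate regularity --- the honest argument tracks the whole degree sequence, or a suitable potential function --- the bookkeeping runs as follows. A fixed vertex is added with probability $\approx p_i(1-p_i)^{D_i}\approx \eps e^{-\eps}/D_i$, so round $i$ contributes $\approx \eps e^{-\eps}\,|V_i|/D_i$ vertices to the independent set; a fixed vertex survives the round (is neither added nor adjacent to an added vertex) with probability $\approx (1-\eps e^{-\eps}/D_i)^{D_i}\approx e^{-\eps e^{-\eps}}$, so $|V_{i+1}|\approx e^{-\eps e^{-\eps}}|V_i|$, and one can arrange $D_{i+1}\approx e^{-\eps e^{-\eps}}D_i$ as well. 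Thus $|V_i|/D_i$ stays of order $n/\Delta$, the maximum degree shrinks by a constant factor per round, and after $T\asymp (\log\Delta)/(\eps e^{-\eps})$ rounds it has dropped to $O(1)$, at which point one stops and takes any independent set in what remains. Summing the per-round contributions yields an independent set of size $(1-o(1))\,n\log\Delta/\Delta$ once $\eps\to 0$ is taken suitably, the factor $\log\Delta$ reflecting that the maximum degree must be driven multiplicatively from $\Delta$ down to a constant.

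These estimates are not exact, and controlling the error is exactly where the codegree hypothesis enters and where the difficulty lies. In computing the probability that a vertex $u$ survives a round, the events ``$w$ is added'' over the neighbors $w$ of $u$ fail to be independent precisely when two such neighbors are adjacent (so they form a triangle with $u$) or share a common neighbor; the corresponding inclusion--exclusion corrections are governed by the codegrees of $G$, and the hypothesis that every codegree is at most $\Delta/(2\log\Delta)^7$ makes them $o(1/\log\Delta)$ per round, hence negligible over the $\Theta(\log\Delta)$ rounds (the exponent $7$ and the constant $2$ are not optimized). What remains is the genuinely technical part: showing that the vertex counts and degrees concentrate around their expectations throughout all $T$ rounds --- e.g.\ by the bounded-differences/differential-equation method applied round by round --- so that the ``$\approx$'' estimates can be made rigorous, and checking that the relevant correlation terms stay under control as vertices are deleted. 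Carrying this out while keeping the leading constant equal to $1-o(1)$ is the content of \cite{CJMS}, to which we refer for the details.
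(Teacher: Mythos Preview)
The paper does not prove \cref{thm:CJMS} at all: it is imported as a black box from \cite{CJMS}, exactly as you note in your first sentence. Your additional sketch of the semi-random nibble argument is supplementary and broadly in the spirit of \cite{CJMS}, but since the paper itself offers no proof, there is nothing further to compare.
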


To complete the proof of \cref{thm:main}, we must lower-bound $\Delta_K$; equivalently, we must give an upper bound on the volume of $I_K$. Indeed, the following holds.

\begin{proposition}\label{prop:I-bound} There exists a constant $C_{\mathrm{int}}>0$ for which, for any centrally symmetric convex body $K\subset\RR^d$ of volume $1$,
\[\vol(I_K)\leq \left(\frac{C_{\mathrm{int}}\log^2d}{d}\right)^{d/2}.\]
\end{proposition}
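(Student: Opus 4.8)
The plan is to reduce the volume bound on $I_K$ to a volume bound on the polar projection body $\Pi^* K$, and then estimate the latter via Petty's projection inequality together with a lower bound on the intrinsic volumes / covolume structure coming from Schmuckenschl\"ager's result. The first step is to observe that membership $x \in I_K$ means the translated body $K+x$ has large overlap with $K$, and the overlap volume $\vol(K \cap (K+x))$ is controlled from above by the ``width'' of $K$ in the direction of $x$: roughly, $\vol(K \cap (K+x)) \le \vol(P_{x^\perp} K) \cdot \ell_K(x)$, where $P_{x^\perp} K$ is the orthogonal projection of $K$ onto the hyperplane perpendicular to $x$ and $\ell_K(x)$ is (a constant times) the length of the longest chord of $K$ parallel to $x$, divided by $|x|$. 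More precisely, I would slice $K \cap (K+x)$ by hyperplanes perpendicular to $x$: each slice is contained in a translate of the corresponding slice of $K$, so its area is at most the area of the largest such slice, which is at most $\vol_{d-1}(P_{x^\perp}K)$; and the range of the slicing parameter for which $K \cap (K+x)$ is nonempty has length at most (width of $K$ in direction $x$) $- |x|$. This gives an inequality of the form
\[
\vol(K \cap (K+x)) \;\le\; \vol_{d-1}(P_{x^\perp} K)\bigl(w_K(x/|x|) - |x|\bigr)_+ ,
\]
and hence $x \in I_K$ forces $|x| \le w_K(x/|x|)$ and the product $\vol_{d-1}(P_{x^\perp}K)\, w_K(x/|x|)$ to be at least $d^{-10}$. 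In other words, $I_K$ is contained in a star-shaped set whose radial function in direction $u$ is $\min\bigl(w_K(u),\, d^{10}\vol_{d-1}(P_{u^\perp}K)\bigr)$ — and the key point is that $\vol_{d-1}(P_{u^\perp}K)$ is exactly (up to the standard normalization) the support function of the projection body $\Pi K$, so this set is essentially a dilate of the polar projection body $\Pi^* K$ intersected with $d^{10}$ times itself.

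Granting this containment, I would bound $\vol(I_K)$ by $\vol$ of the relevant multiple of $\Pi^* K$. The support function of $\Pi K$ in direction $u$ is $h_{\Pi K}(u) = \vol_{d-1}(P_{u^\perp} K)$, so the radial function of $\Pi^* K$ is $\rho_{\Pi^* K}(u) = 1/\vol_{d-1}(P_{u^\perp}K)$, and $I_K$ (after discarding the $w_K$ truncation, which only helps) is contained in $d^{10}\,\Pi^* K$ — wait, I need to be careful with the direction of the inequality: $x \in I_K$ gives $\vol_{d-1}(P_{x^\perp}K) \ge d^{-10}/w_K(x/|x|) \ge d^{-10}/ (2\,\mathrm{diam}(K))$, which bounds $|x| \rho_{\Pi^*K}(x/|x|)^{-1} $ from... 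Let me instead use the cleaner route: since every slice of $K$ has $(d-1)$-volume at most $\vol_{d-1}(P_{x^\perp}K)$ and the chord of $K$ in direction $x$ through a generic point has length at most $\vol(K)/\vol_{d-1}(P_{x^\perp}K)=1/\vol_{d-1}(P_{x^\perp}K)$ on average (Fubini), one gets $w_K(x/|x|)\ge |x|$ together with an averaged bound forcing $|x|\vol_{d-1}(P_{x^\perp}K)\lesssim d^{10}$, i.e. $I_K \subseteq C d^{10}\, \Pi^* K$ for an absolute constant $C$ after accounting for the $\vol(K)=1$ normalization and a standard estimate $w_K \le C$ (coming from the fact that a volume-one symmetric body has bounded ``few-directional'' width on a set of nearly full measure; directions where $K$ is very long are rare). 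Then
\[
\vol(I_K) \;\le\; (Cd^{10})^d \,\vol(\Pi^* K).
\]

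The final and most substantive step is to bound $\vol(\Pi^* K)$. Here I invoke Petty's projection inequality, which states that $\vol(K)^{d-1}\vol(\Pi^* K)$ is maximized, over all convex bodies of a given... no — Petty's inequality says $\vol(\Pi^* K)\,\vol(K)^{d-1} \le \vol(\Pi^* B)\,\vol(B)^{d-1}$ where $B$ is a Euclidean ball, equivalently $\vol(\Pi^* K) \le \vol(\Pi^* B_2^d) = (\omega_{d-1}^{d-1}/\omega_d^{d-2})\cdots$, which is of order $(c/d)^{d}$ up to polynomial factors after using $\vol(K)=1$ and Stirling. Combining, $\vol(I_K) \le (Cd^{10})^d (c/d)^{d/2}\cdot(\text{poly}) = (C' \log^? d / d)^{?}$ — this does NOT immediately give the claimed $(C\log^2 d/d)^{d/2}$; the factor $d^{10d}$ is far too large. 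This is the main obstacle, and it means the crude chord bound is insufficient: I must instead use Schmuckenschl\"ager's result (as the paper says), which presumably sharpens the relationship between $\vol(K\cap(K+x))$ and the projection body by gaining the correct power of $d$ — specifically replacing the naive ``slice area times chord length'' by an inequality of the form $\vol(K\cap(K+x)) \le e^{-c|x|\vol_{d-1}(P_{x^\perp}K)}$ or $\vol(K\cap(K+x)) \le (1 - c\,|x|\,h_{\Pi K}(x/|x|)/d)^{d}$, so that the threshold $d^{-10}$ translates to $|x| h_{\Pi K}(x/|x|) \lesssim \log d$ rather than $\lesssim d^{10}$. Thus the real plan is: \textbf{(a)} use Schmuckenschl\"ager's bound to show $I_K \subseteq (C\log d)\,\Pi^* K$; \textbf{(b)} apply Petty's projection inequality and Stirling to get $\vol(\Pi^*K) \le (C/d)^{d/2}$ up to subexponential factors; \textbf{(c)} multiply to obtain $\vol(I_K) \le (C\log^2 d / d)^{d/2}$. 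Step (a), correctly extracting the logarithmic (rather than polynomial) dependence from Schmuckenschl\"ager's estimate, is where the work lies; steps (b) and (c) are standard convex-geometry bookkeeping once the normalization $\vol(K)=1$ is in hand.
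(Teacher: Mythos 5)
Your final plan (a)--(c) --- Schmuckenschl\"ager's inequality to show $I_K \subseteq (C\log d)\,\Pi^*K$, Petty's projection inequality to reduce $\vol(\Pi^*K)$ to the Euclidean-ball case, and a direct Stirling computation giving $\vol(\Pi^*B)\le (2\pi/d)^{d/2}$ --- is exactly the proof in the paper, and you even correctly guess the form of the Schmuckenschl\"ager bound, namely $\vol(K\cap(K+x))\le e^{-h_{\Pi K}(x)}$. The initial crude slicing attempt you abandon is indeed a dead end for precisely the reason you identify: without the log-concavity input it yields only a polynomial (rather than logarithmic) containment of $I_K$ in $\Pi^*K$, which cannot be salvaged by Petty's bound.
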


\noindent It is in proving \cref{prop:I-bound} that we will use the volume bounds of Schmuckenschl\"ager and of Petty.

\begin{proof}[Proof of \cref{thm:main}] For each compact $\Omega\subset\RR^d$, we find a finite set $A\subset\Omega$ so that
\[|A|\geq(1-o(1))\frac{d\log d}{2^{d+1}}\vol(\Omega)\]
and the balls $x+K$ and $y+K$ are disjoint for $x,y\in A$. Via a standard compactness argument (see \cite[Section~1.1]{BMP}), having such a packing within each compact set is enough to give the desired bound on $\delta_T(K)$.

\vspace{2mm}

We may assume $d$ is sufficiently large: for $d$ small, we can use the trivial bound $\delta_T(K)\geq 2^{-d}$ and absorb the loss into the $o(1)$ term. Let $\Delta=\min\big(\Delta_K,d^{d^{8/7}}\big)$, so that (using \cref{prop:I-bound}) we have $\Delta_K\geq \Delta>d^{12}$. So, by \cref{lem:graph-construction}, we may choose a set $X\subset\Omega$ with
\[|X|\geq \frac{(1-o(1))\Delta\vol(\Omega)}{2^d}\]
so that $G(X,K)$ has maximum degree at most $\Delta+\Delta^{2/3}$ and maximum codegree at most $d^{-9}\Delta$. Since $\Delta\leq d^{d^{8/7}}$, we have $d^{-9}\Delta\leq \Delta/(2\log\Delta)^7$. So, we may apply \cref{thm:CJMS} to the graph $G(X,K)$ to find an independent set $A\subset X$ with
\[|A|\geq (1-o(1))\frac{|X|\log\Delta}{\Delta}\geq \frac{(1-o(1))\vol(\Omega)\log\Delta}{2^d}.\]
This set $A$ is our set of centers. Finally, \cref{prop:I-bound} implies that
\begin{align*}
\log\Delta
&\geq\min\left(d^{8/7}\log d,\log\Delta_K\right)\\
&=\min\left(d^{8/7}\log d,\log\left((d\vol I_K)^{-1}\right)\right)\\
&\geq \frac{(1-o(1))d\log d}2.
\end{align*}
This finishes the proof of \cref{thm:main}.
\end{proof}

\subsection{Non-symmetric sets}\label{sec:non-sym}

Here, we explain how to deduce \cref{cor:main} from \cref{thm:main}. This procedure goes back to Rogers and Shephard \cite{RS}, who coupled their bound on the volume of $K-K$ with an observation of Minkowski \cite{Minkowski1904} to attain the bound
\[\delta_T(K)\geq(1-o(1))\frac{\sqrt{\pi d}}{2^{2d}}\]
for any convex body $K$. For completeness, we outline the deduction.

\begin{lemma}[{\cite{Minkowski1904}}]\label{lem:non-sym} For any convex body $K\subset\RR^d$,
\[\frac{\delta_T(K)}{\vol K}=\frac{\delta_T\left(\frac{K-K}2\right)}{\vol\left(\frac{K-K}2\right)}.\]
\end{lemma}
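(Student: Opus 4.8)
The plan is to establish a density-preserving bijection between packings of $K$ and packings of its central symmetrization $\frac{K-K}{2}$. The key geometric fact is that two translates $x + K$ and $y + K$ have disjoint interiors if and only if $x - y \notin \mathrm{int}(K - K)$, which in turn holds if and only if the translates $x + \frac{K-K}{2}$ and $y + \frac{K-K}{2}$ have disjoint interiors. Indeed, $(x+K) \cap (y+K) \neq \emptyset$ means $x - y \in K - K = 2 \cdot \frac{K-K}{2}$, and $\big(x + \frac{K-K}{2}\big) \cap \big(y + \frac{K-K}{2}\big) \neq \emptyset$ means $x - y \in \frac{K-K}{2} - \frac{K-K}{2} = K - K$ as well, using that $\frac{K-K}{2}$ is centrally symmetric so $\frac{K-K}{2} - \frac{K-K}{2} = 2 \cdot \frac{K-K}{2} = K - K$. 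Hence a set $A \subseteq \RR^d$ is a valid set of centers for a packing of $K$ if and only if it is a valid set of centers for a packing of $\frac{K-K}{2}$.

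Next I would translate this bijection of center-sets into the stated identity of normalized densities. Fix a large bounded region $\Omega$ and a packing of $K$ with center set $A \subseteq \Omega$; the fraction of $\RR^d$ covered is (up to boundary effects that vanish as $\Omega$ grows) equal to $|A \cap \Omega| \cdot \vol(K) / \vol(\Omega)$, while the corresponding packing of $\frac{K-K}{2}$ with the same center set covers a fraction $|A \cap \Omega| \cdot \vol\big(\frac{K-K}{2}\big)/\vol(\Omega)$. Taking the supremum over packings (which by the correspondence above ranges over the same collection of center sets for both bodies) and letting $\Omega \to \RR^d$, the quantity $|A \cap \Omega|/\vol(\Omega)$ that achieves the optimal density is common to both, so dividing out gives $\delta_T(K)/\vol(K) = \delta_T\big(\frac{K-K}{2}\big)/\vol\big(\frac{K-K}{2}\big)$. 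One should be slightly careful that the densest-packing density $\delta_T$ is well-defined as a limit (not merely a limsup) and that the same sequence of $\Omega$'s works for both bodies, but since the center sets literally coincide this is immediate.

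The only genuine subtlety — and the step I expect to require the most care — is the reduction from ``disjoint interiors'' (the correct notion for a packing, which allows tangential contact) to the set-theoretic statement about $K - K$, and ensuring that the argument is insensitive to whether one works with open or closed translates and to the measure-zero boundary of $K - K$. Since $K$ is compact and convex, $K - K$ is compact and convex with $\mathrm{int}(K-K) = \mathrm{int}(K) - \mathrm{int}(K)$, and the equivalence $\mathrm{int}(x+K) \cap \mathrm{int}(y+K) = \emptyset \iff x - y \notin \mathrm{int}(K-K)$ holds; the analogous statement for $\frac{K-K}{2}$ follows from $\mathrm{int}\big(\frac{K-K}{2} - \frac{K-K}{2}\big) = \mathrm{int}(K-K)$. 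This handles packings in the standard sense and completes the proof; everything else is bookkeeping with the definition of $\delta_T$.
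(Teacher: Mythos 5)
Your proof is correct and follows the same route as the paper's one-line sketch: both rest on the observation that a discrete set $A$ is a valid set of centers for a packing of $K$ if and only if it is for $\frac{K-K}{2}$, since in each case the packing condition on a pair of distinct centers $a,b$ is precisely $a-b\notin\operatorname{int}(K-K)$ (using $\frac{K-K}{2}-\frac{K-K}{2}=K-K$ by convexity and central symmetry). You simply supply the details of this observation and of the bookkeeping step that converts it into the ratio identity, including the minor point about interiors versus closed translates, which is indeed the only place any care is needed.
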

\begin{proof}[Proof sketch] This equality follows from the fact that a discrete set $A\subset\RR^d$ serves as the centers for a packing of $K$ if and only if it serves as the centers for a packing of $\frac{K-K}2$.
\end{proof}

\begin{lemma}[Rogers--Shephard inequality; {\cite{RS}}]\label{lem:RS} For any convex body $K\subset\RR^d$,
\[\vol(K-K)\leq \binom{2d}d\vol K,\]
with equality if and only if $K$ is a simplex.
\end{lemma}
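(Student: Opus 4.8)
The plan is to run the classical argument of Rogers and Shephard via the \emph{intersection function} $f(x):=\vol\big(K\cap(K+x)\big)$. Three elementary facts set up the proof: (i) $f$ is supported on $K-K$ and vanishes on $\partial(K-K)$; (ii) $f(0)=\vol K$; and (iii) since $f(x)=\int_{\RR^d}\one[y\in K]\,\one[y-x\in K]\,dy$, Fubini's theorem gives
\[\int_{\RR^d}f(x)\,dx=\int_{\RR^d}\int_{\RR^d}\one[y\in K]\,\one[y-x\in K]\,dy\,dx=(\vol K)^2,\]
where the last step integrates over $x$ first and uses translation-invariance of Lebesgue measure.

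The crucial structural input is that $f^{1/d}$ is concave on $K-K$. This follows from the Brunn--Minkowski inequality applied to the inclusion
\[(1-t)\big(K\cap(K+x_0)\big)+t\big(K\cap(K+x_1)\big)\ \subseteq\ K\cap\big(K+(1-t)x_0+tx_1\big),\]
which is itself immediate from the convexity of $K$. Let $\|\cdot\|$ denote the gauge of the centrally symmetric body $K-K$; since $0$ lies in its interior, $\|\cdot\|$ is a norm. For $x\in K-K$, writing $x=(1-\|x\|)\cdot 0+\|x\|\cdot(x/\|x\|)$ with $x/\|x\|\in\partial(K-K)$, concavity of $f^{1/d}$ together with $f(0)^{1/d}=(\vol K)^{1/d}$ and $f(x/\|x\|)=0$ yields the pointwise bound
\[f(x)\ \geq\ \vol(K)\,(1-\|x\|)^d\qquad\text{for every }x\in K-K.\]

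Integrating this over $K-K$ and using (iii) reduces the problem to evaluating $\int_{K-K}(1-\|x\|)^d\,dx$. Since $\vol\{x:\|x\|\leq u\}=u^d\vol(K-K)$ for $u\in[0,1]$, this integral equals $d\,\vol(K-K)\int_0^1(1-u)^d u^{d-1}\,du$, and the Beta-integral identity $\int_0^1(1-u)^d u^{d-1}\,du=\tfrac{(d-1)!\,d!}{(2d)!}=\big(d\binom{2d}{d}\big)^{-1}$ turns this into $\vol(K-K)\big/\binom{2d}{d}$. Combining, $(\vol K)^2\geq\vol(K)\vol(K-K)\big/\binom{2d}{d}$, which rearranges to the claimed inequality.

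I expect the equality clause to be the main obstacle. Equality forces $f(x)=\vol(K)(1-\|x\|)^d$ for almost every --- and hence, by continuity, every --- $x\in K-K$; that is, $f^{1/d}$ must be exactly the cone function over $K-K$. One then has to show that this rigidity on $f$ forces $K$ itself to be a simplex, which amounts to tracking the equality case of the Brunn--Minkowski step above (it would force the sections $K\cap(K+x)$ to be homothetic as $x$ ranges along a ray) and is the genuinely delicate part of the original argument. Since \cref{cor:main} requires only the inequality, one alternative is to cite \cite{RS} for the equality characterization.
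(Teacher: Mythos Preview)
The paper does not give its own proof of this lemma; it is merely stated with a citation to \cite{RS} and then used as a black box in deducing \cref{cor:main}. Your argument is a correct and standard rendering of the classical Rogers--Shephard proof via the intersection function $f(x)=\vol(K\cap(K+x))$: the Brunn--Minkowski step giving concavity of $f^{1/d}$, the cone lower bound $f(x)\geq\vol(K)(1-\|x\|)^d$, and the Beta-integral evaluation are all sound, and your handling of the equality case---proving the inequality in full and deferring the rigidity characterization to \cite{RS}---is entirely appropriate here since the paper only invokes the inequality (and, separately, the attainment by simplices to motivate \eqref{eq:RS}).
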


\noindent We remark that, since equality in \cref{lem:RS} holds in the case of a simplex $K=\triangle_d$, the trivial bound $\delta_T(\frac{\triangle_d-\triangle_d}2)\leq 1$ furnishes the upper bound $\delta_T(\triangle_d)\leq 2^d/\binom{2d}d\sim 2^{-d}\sqrt{\pi d}$ mentioned in the introduction.

\begin{proof}[Proof of \cref{cor:main}] Note that $L:=\frac{K-K}2$ is convex and centrally symmetric. So, chaining \cref{lem:non-sym}, \cref{lem:RS}, and \cref{thm:main} gives
\[\delta_T(K)=\frac{\vol K}{\vol L}\delta_T(L)\leq \frac{2^d}{\binom{2d}d}\delta_T(L)\leq \frac{(1-o(1))d\log d}{2\binom{2d}d}.\]
The corollary then follows from the asymptotic $\binom{2d}d=2^{2d}(\pi d)^{-1/2}(1+o(1))$.
\end{proof}

\section{Selecting the set \texorpdfstring{$X$}{X}: Proof of \texorpdfstring{\cref{lem:graph-construction}}{Lemma 2.1}}\label{sec:set-select}

We now prove \cref{lem:graph-construction}. As previously mentioned, our strategy will be to first pick a set of points $\bX\subset\Omega$ via a Poisson point process and then to remove a $o(1)$-fraction of the points to form $X$. The proof is quite similar to the proof of \cite[Lemma 2.1]{CJMS}. The additional complexity is in bounding the maximum codegree, and this is where the choice of $\Delta_K$ as determined by $\vol(I_K)$ is relevant. We will need the following standard tail bound for Poisson random variables.

\begin{lemma}[Follows from {\cite[Theorem 2.1]{JLR}}]\label{lem:poisson-conc} A random variable $Z\sim\Pois(\lambda)$ satisfies the following upper tail bound: for each $t\geq 1$,
\[\Pr\big[Z>(1+t)\lambda\big]\leq e^{-\lambda t/3}.\]
\end{lemma}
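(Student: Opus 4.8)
The plan is to prove this by the standard exponential-moment (Chernoff) method. First I would recall that a Poisson random variable $Z\sim\Pois(\lambda)$ has moment generating function $\EE[e^{sZ}]=\exp(\lambda(e^s-1))$, valid for every real $s$. Then, for any $s>0$, Markov's inequality applied to $e^{sZ}$ gives
\[\Pr\big[Z>(1+t)\lambda\big]\leq e^{-s(1+t)\lambda}\,\EE[e^{sZ}]=\exp\!\big(\lambda(e^s-1-s(1+t))\big).\]
Optimizing the exponent over $s>0$ leads to the choice $s=\log(1+t)$, which yields
\[\Pr\big[Z>(1+t)\lambda\big]\leq \exp\!\big(-\lambda\big((1+t)\log(1+t)-t\big)\big).\]

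It then remains to verify the elementary inequality $(1+t)\log(1+t)-t\geq t/3$ for all $t\geq 1$. I would do this by showing that the ratio $g(t)=\frac{(1+t)\log(1+t)}{t}-1$ is increasing on $(0,\infty)$: a short computation gives $g'(t)=\frac{t-\log(1+t)}{t^2}>0$, so $g(t)\geq g(1)=2\log 2-1>\tfrac13$ for every $t\geq 1$. Combining this with the previous display completes the proof.

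An alternative and even shorter route is to quote the cited Chernoff-type estimate of Janson, Łuczak and Ruciński directly. Their bound gives $\Pr[Z\geq\lambda+x]\leq\exp\!\big(-x^2/(2(\lambda+x/3))\big)$ for $x\geq 0$; substituting $x=t\lambda$ and simplifying, the claimed bound $e^{-\lambda t/3}$ reduces to the inequality $\frac{3t^2}{2(3+t)}\geq\frac{t}{3}$, i.e.\ $7t\geq 6$, which holds whenever $t\geq 1$. Either presentation is acceptable; I would likely include the self-contained Chernoff derivation and remark that it also follows from the reference.

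There is no genuine obstacle here. The only non-mechanical ingredient is the one-variable inequality $(1+t)\log(1+t)-t\geq t/3$ on $[1,\infty)$, and that is immediate once one notices the relevant ratio is monotone. The two points to be careful about are that the Chernoff optimization is legitimate — the Poisson MGF is finite for all real $s$, so Markov applies for every $s>0$ — and that the hypothesis $t\geq 1$ is actually needed: for $t$ near $0$ the true large-deviation rate $(1+t)\log(1+t)-t$ behaves like $t^2/2$ rather than like $t$, so the stated linear bound would fail in that regime.
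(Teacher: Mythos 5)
Your proposal is correct. The paper gives no proof here; it simply cites Theorem~2.1 of Janson, \L uczak, and Ruci\'nski, which is exactly the second route you describe, and your reduction (substitute $x=t\lambda$ and check $\tfrac{3t^2}{2(3+t)}\geq\tfrac t3$, i.e.\ $7t\geq 6$) is the correct way to extract the stated bound from that reference. Your self-contained Chernoff derivation is also correct: the optimal $s=\log(1+t)$ gives the rate $(1+t)\log(1+t)-t$, and the monotonicity argument for $g(t)=\tfrac{(1+t)\log(1+t)}{t}-1$ via $g'(t)=\tfrac{t-\log(1+t)}{t^2}>0$ together with $g(1)=2\log 2-1>\tfrac13$ closes the gap. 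This first route is more elementary and self-contained, which is a mild advantage over the paper's bare citation; the only thing it costs is a couple of extra lines. Your closing remark that the hypothesis $t\geq 1$ is genuinely needed (because the large-deviation rate is quadratic, not linear, for small $t$) is a good sanity check and correctly identifies why one cannot hope for the linear bound uniformly in $t>0$.
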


We now show \cref{lem:graph-construction}.

\begin{proof}[Proof of \cref{lem:graph-construction}] Through what follows, we write $I=I_K$, as the relevant body $K$ will not change. We choose the set $X$ in the following manner. Let $\bX$ be a set of points chosen according to a Poisson point process inside $\Omega$ with intensity $\lambda:=2^{-d}\Delta$. Define the following subsets $\bX_1,\bX_2,\bX_3\subset\bX$:
\begin{align*}
\bX_1&=\left\{x\in \bX : |\bX\cap(x+2K)|>\Delta+\Delta^{2/3}\right\},\\
\bX_2&=\left\{x\in\bX : \bX\cap (x+2I)\text{ contains a point other than $x$}\right\},\\
S_3&=\left\{(x,y)\in\bX^2 : x-y\not\in 2I\text{ and }|(\bX\setminus\{x,y\})\cap (x+2K)\cap (y+2K)|\geq d^{-9}\Delta\right\},\\
\bX_3&=\left\{x\in\bX : \exists y\in\bX\text{ with }(x,y)\in S_3\right\}.
\end{align*}
Set $X=\bX\setminus(\bX_1\cup\bX_2\cup\bX_3)$. We make the following observations.
\begin{itemize}
    \item By the definition of $\bX_1$, the degree of any vertex of $G(X,K)$ is at most $\Delta+\Delta^{2/3}$.
    \item By the definition of $\bX_2$, each pair $(x,y)$ of distinct elements of $X$ satisfies $x-y\not\in 2I$.
    \item By the definition of $\bX_3$ and the above property, the maximum codegree of $G(X,K)$ is at most $d^{-9}\Delta$.    
\end{itemize}
So, as long as $|X|\geq (1-2/d)(\Delta/2^d)\vol(\Omega)$, $X$ will satisfy the conditions of the lemma. We will in fact show that $\EE|X|$ exceeds this quantity; the result follows by Markov's inequality. We have
\begin{align*}
\EE|\bX|&=\lambda\vol(\Omega)=\Delta/2^d\cdot\vol(\Omega)\text{ and }\\
\EE|X|&\geq\EE|\bX|-\EE|\bX_1|-\EE|\bX_2|-\EE|S_3|.
\end{align*}
Hence, it suffices to show the following bounds:
\begin{align}
\EE|\bX_1|&\leq e^{-d}\EE|\bX|\label{eq:X1-bound}\\
\EE|\bX_2|&\leq d^{-1}\EE|\bX|\label{eq:X2-bound}\\
\EE|S_3|&\leq e^{-d}\EE|\bX|.\label{eq:S3-bound}
\end{align}

\begin{proof}[Proof of \eqref{eq:X1-bound}] Conditioned on $x\in\bX$, $|\bX\cap (x+2K))|-1$ is Poisson with mean
\[\lambda\vol\big((x+2K)\cap\Omega\big)\leq2^d\lambda=\Delta.\]
So, \cref{lem:poisson-conc} implies
\[\Pr[x\in\bX_1|x\in\bX]\leq e^{-(\Delta^{2/3}-1)/3}.\]
This bound, together with the fact that $\Delta>d^{12}$, gives \eqref{eq:X1-bound}.\phantom\qedhere
\end{proof}

\begin{proof}[Proof of \eqref{eq:X2-bound}] Conditioned on $x\in\bX$, the random variable $|\bX\cap (x+2I)|-1$ is Poisson with mean 
\[\lambda\vol\big((x+2I)\cap\Omega\big)\leq\lambda\vol(2I)=\Delta\vol(2I)2^{-d}=\Delta\vol(I)\leq\frac1d\]
So, $|\bX\cap(x+2I)|>1$, i.e.~$x\in\bX_2$, with probability at most $1-e^{-1/d}\leq 1/d$. This gives \eqref{eq:X2-bound}.\phantom\qedhere
\end{proof}

\begin{proof}[Proof of \eqref{eq:S3-bound}] If $(x,y)\in S_3$, then $x+2K$ and $y+2K$ must intersect, so $x-y\in 4K$. We now show that, for any pair of distinct points $(x,y)$ with $x-y\in 4K$,
\[\Pr\left[(x,y)\in S_3\big|x,y\in\bX\right]\leq e^{-d^{-10}\Delta}.\]
Indeed, fixing $x$ and $y$ with $x-y\not\in 2I$ and conditioning on $x,y\in\bX$, we have
\[\big|(\bX\setminus\{x,y\})\cap (x+2K)\cap (y+2K)\big|\sim\Pois\big(\lambda\vol((x+2K)\cap(y+2K))\big),\]
and
\[\vol\big((x+2K)\cap(y+2K)\big)=2^d\vol\left(K\cap\left(K+\frac{x-y}2\right)\right)\leq 2^dd^{-10}\]
since $(x-y)/2\not\in I$. We conclude that $|(\bX\setminus\{x,y\})\cap (x+2K)\cap (y+2K)|$ is Poisson with mean at most $d^{-10}\Delta$. As a result, the probability that it exceeds $d^{-9}\Delta$ is at most, by \cref{lem:poisson-conc},
\[\Pr\left[\Pois(d^{-10}\Delta)\geq 4d^{-10}\Delta\right]\leq e^{-d^{-10}\Delta},\]
as desired. Now, we can use this bound to upper-bound $\EE|S_3|$:
\begin{align*}
\EE|S_3|
&\leq e^{-d^{-10}\Delta}\cdot\EE\big|\{(x,y) : x,y\in\bX,\ x\neq y,\ x-y\in 4K\}\big|\\
&\leq e^{-d^{-10}\Delta}\cdot\EE|\bX|\cdot\sup_{x\in\Omega}\EE\left[\big|\{y\in\bX\setminus\{x\} : x-y\in 4K\}\big||x\in\bX\right]\\
&=e^{-d^{-10}\Delta}\cdot\EE|\bX|\cdot\lambda\vol(4K)=2^d\Delta e^{-d^{-10}\Delta}\cdot\EE|\bX|.
\end{align*}
The fact that $\Delta>d^{12}$ is enough to give \eqref{eq:S3-bound}. \phantom\qedhere
\end{proof}
\vspace{-7.7mm}
\end{proof}

\section{Bounding the volume of \texorpdfstring{$I_K$}{IK}: Proof of \texorpdfstring{\cref{prop:I-bound}}{Proposition 2.3}}\label{sec:I-bound}

We now prove \cref{prop:I-bound}, i.e.~that there exists a constant $C_{\mathrm{int}}>0$ so that, for every dimension $d$ and every centrally symmetric convex body $K$
\[\vol(I_K)\leq\left(\frac{C_{\mathrm{int}}\log^2d}{d}\right)^{d/2},\]
where as before
\[I_K=\left\{x\in\RR^d : \vol(K\cap (K+x))>d^{-10}\right\}.\]
To do this, we must define some auxiliary convex bodies. We will define these by their \emph{support functions}: given a convex body $K\subset\RR^d$, its support function is the function $h_K\colon\RR^d\to\RR$ defined by
\[h_K(y)=\sup_{x\in K}x\cdot y.\]

\begin{definition}\label{def:bodies} Given a convex body $K\subset\RR^d$, the \emph{polar body} $K^*$ of $K$ is given by
\[K^*:=\{y\in\RR^d : |h_K(y)|\leq 1\}.\]
The \emph{projection body} $\Pi K$ of $K$ is defined so that, for each $x$ with $\|x\|_2=1$,
\[h_{\Pi K}(x)=\vol_{\RR^{d-1}}(\text{projection of $K$ onto }x^\perp)\]
The \emph{polar projection body}, denoted $\Pi^*K$, is the polar of the projection body.    
\end{definition}

Our control on $I_K$ comes from the following result of Schmuckenschl\"ager:

\begin{lemma}[{\cite[Theorem~1]{Schmuckenschlager}}]\label{lem:bound-by-polarproj} For any symmetric convex body $K$ and any $\delta\in(0,1)$, one has
\[(1-\delta)\Pi^*K\subset\{x\in\RR^d : \vol(K\cap(K+x))>\delta\}\subset\log\left(\frac1\delta\right)\Pi^*K.\]
\end{lemma}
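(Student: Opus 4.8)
The plan is to reduce the two inclusions to a pair of pointwise bounds on the \emph{covariogram} $g(x) := \vol\big(K \cap (K+x)\big)$, working under the normalization $\vol(K) = 1$ (this is the only case needed for \cref{prop:I-bound}; in general one adjusts the normalizations of $g$ and $\Pi K$, cf.~\cite{Schmuckenschlager}). The relevant structural facts about $g$ are: it is even; $g(0) = 1$ and $0 \le g \le 1$; and $g$ is log-concave, since $g = \mathbf 1_K * \mathbf 1_K$ (using $-K = K$) is a convolution of log-concave functions. Since $\Pi K$ is centrally symmetric (as $K$ is), the gauge of $\Pi^* K$ is exactly the support function $h_{\Pi K}$; write $\|x\|_{\Pi^*K} = h_{\Pi K}(x)$. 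The set appearing in the lemma is $\{x : g(x) > \delta\}$, so it suffices to establish
\[1 - \|x\|_{\Pi^* K} \;\le\; g(x) \;\le\; e^{-\|x\|_{\Pi^* K}}\qquad\text{for all }x\in\RR^d.\]
The left-hand inequality shows $\|x\|_{\Pi^*K} \le 1-\delta$ implies $g(x) \ge \delta$, giving $(1-\delta)\Pi^*K \subset \{g > \delta\}$ up to the boundary of $(1-\delta)\Pi^*K$ (where $g$ can equal $\delta$ exactly, as already happens when $K$ is a cylinder). The right-hand inequality shows $g(x) > \delta$ implies $\|x\|_{\Pi^*K} < \log(1/\delta)$, i.e.~$\{g > \delta\} \subset \log(1/\delta)\Pi^*K$.

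Both bounds come from slicing $g$ along a ray. Fix a unit vector $\theta$, split $\RR^d = \RR\theta \oplus \theta^\perp$, and for $y$ in the projection $P_{\theta^\perp}K$ of $K$ onto $\theta^\perp$ let $\ell(y)$ be the length of the chord $K \cap (y + \RR\theta)$. Computing the fibers of $K \cap (K+t\theta)$ shows that for $t \ge 0$,
\[g(t\theta) = \int_{P_{\theta^\perp}K}\big(\ell(y) - t\big)_+\,dy.\]
Discarding the positive part and using $\int_{P_{\theta^\perp}K}\ell(y)\,dy = \vol(K) = 1$ (Fubini) together with $\vol_{d-1}(P_{\theta^\perp}K) = h_{\Pi K}(\theta)$ (the definition of $\Pi K$), we get $g(t\theta) \ge 1 - t\,h_{\Pi K}(\theta)$, which by $1$-homogeneity of $h_{\Pi K}$ is the left-hand inequality. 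On the other hand, the difference quotient $\frac1t\big(g(0) - g(t\theta)\big) = \frac1t\int_{P_{\theta^\perp}K}\min\{\ell(y),t\}\,dy$ increases to $\vol_{d-1}(P_{\theta^\perp}K) = h_{\Pi K}(\theta)$ as $t \downarrow 0$ (monotone convergence, using that $\ell$ is concave and positive on the interior of $P_{\theta^\perp}K$, so $\{\ell = 0\}$ is null). Hence the one-sided derivative satisfies $\frac{d}{dt}\big|_{t=0^+} g(t\theta) = -h_{\Pi K}(\theta)$.

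For the right-hand inequality, put $\phi = -\log g$; this is convex (as $g$ is log-concave), nonnegative, and $\phi(0) = 0$. Convexity together with $\phi(0) = 0$ makes $t \mapsto \phi(tx)/t$ nondecreasing on $(0,1]$, so $\phi(x) \ge \lim_{t\to 0^+}\phi(tx)/t$, and this limit is the one-sided directional derivative of $\phi$ at $0$ in direction $x$, equal to $-\frac{d}{dt}\big|_{0^+}\log g(tx) = -g'(0^+;x)/g(0) = h_{\Pi K}(x) = \|x\|_{\Pi^*K}$ by the previous paragraph and $1$-homogeneity. Thus $-\log g(x) \ge \|x\|_{\Pi^*K}$, which is the desired bound, and the two inclusions follow as described.

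I expect the only genuine work to lie in the geometric bookkeeping: verifying the fiber computation that yields the slicing formula for $g(t\theta)$, justifying the limit of the difference quotient (this is where convexity of $K$ enters, to see that the chord length $\ell$ is measurable, concave, and positive on the interior of the shadow, so that $\{\ell = 0\}$ is negligible), and tracking the normalization $\vol(K) = 1$. Once $\frac{d}{dt}\big|_{0^+}g(t\theta) = -h_{\Pi K}(\theta)$ and log-concavity of $g$ are in hand, the tangent-line estimate and both inclusions are routine.
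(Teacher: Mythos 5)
Your proof of the second containment is essentially the paper's argument: log-concavity of the covariogram $g$, the slicing formula for the one-sided derivative $g'(0^+;\theta)=-h_{\Pi K}(\theta)$, and the tangent-line bound for the convex function $-\log g$ at the origin. The paper deliberately omits the first containment since only the second is needed for \cref{prop:I-bound}; your proof of it is also correct, and your observation that with strict inequality the first containment can fail on the boundary of $(1-\delta)\Pi^*K$ (e.g.\ for the cube, where $g(t\theta)=1-t\,h_{\Pi K}(\theta)$ exactly) is a fair caveat about the statement as written.
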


\noindent We sketch a proof of the latter containment, which is the only part we need, using the theory of logarithmically concave (``log-concave'') distributions. (For the relevant definitions, see the introduction of \cite{Prekopa}.) Our proof is essentially a streamlined version of Schmuckenschl\"ager's original proof.

\begin{proof}[Proof sketch] Let $f(x)=\vol(K\cap(K+x))$. We have that $f$ is the probability density function of the convolution of the uniform measure on $K$ with itself. The uniform measure on $K$ is log-concave, as $K$ is convex; therefore, results of Pr\'ekopa \cite[Theorems~2~and~7]{Prekopa} implies that $f$ is a log-concave function. Fix some vector $y\in\RR^d$ with $\|y\|_2=1$, and define $g\colon\RR_{\geq 0}\to\RR$ by $g(t)=f(ty)$. Since $f$ is log-concave, $g$ is as well. Let $\pi$ be the projection operator onto $y^\perp$. We claim that $g'(0)=-\vol(\pi(K))$. Indeed, this follows from the expression
\[g(t)=\int_{\pi(K)}\max\left((\text{length of }\pi^{-1}(z))-t,0\right)dz.\]
The fact that $g$ is log-concave thus implies
\[\log\vol(K\cap (K+ty))=\log g(t)\leq \log g(0)+t\frac{g'(0)}{g(0)}=-t\cdot h_{\Pi K}(y)=-h_{\Pi K}(ty).\]
Setting $x=ty$, we conclude that, if $\vol(K\cap (K+x))>\delta$, then $\log(1/\delta)\geq h_{\Pi K}(x)$. This is equivalent to $x\in(\log(1/\delta))\Pi^*K$.
\end{proof}

\noindent The second containment in \cref{lem:bound-by-polarproj} bounds $I_K$ by $\Pi^*K$ within a logarithmic factor in the dimension. It remains to upper-bound the volume of $\Pi^*K$. This can be done via an inequality of Petty.

\begin{lemma}[{\cite{Petty}}]\label{lem:petty} Let $K\subset\RR^d$ be a convex body, and let $B$ be a Euclidean ball satisfying $\vol K=\vol B$. Then
\[\vol\Pi^*K\leq\vol\Pi^*B.\]
\end{lemma}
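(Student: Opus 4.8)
The plan is to prove by Steiner symmetrization that Euclidean balls maximize $\vol\Pi^*K$ among convex bodies of a fixed volume, which is precisely \cref{lem:petty}. For a unit vector $u$, write $S_uK$ for the Steiner symmetral of $K$ in the direction $u$. Three facts will be taken as standard: $\vol(S_uK)=\vol K$; the map $K\mapsto\vol\Pi^*K$ is continuous in the Hausdorff metric (projection, polarity, and volume all being continuous); and there is a sequence of Steiner symmetrizations of $K$ converging in the Hausdorff metric to the centered ball $B$ with $\vol B=\vol K$. Granting these, \cref{lem:petty} follows from the single-step \emph{monotonicity}
\[\vol\Pi^*(S_uK)\ \geq\ \vol\Pi^*K\qquad\text{for every convex body }K\text{ and every unit vector }u.\]

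To prove the monotonicity, let $P_H$ denote orthogonal projection onto a linear subspace $H$. By \cref{def:bodies}, the radial function of $\Pi^*K$ at a unit vector $\omega$ equals $1/h_{\Pi K}(\omega)=1/\vol_{d-1}(P_{\omega^\perp}K)$, so integrating in polar coordinates gives
\[\vol\Pi^*K=\frac1d\int_{S^{d-1}}\frac{d\omega}{\vol_{d-1}(P_{\omega^\perp}K)^{d}}.\]
Writing $R_u$ for the reflection in $u^\perp$, the geometric crux is the pointwise bound
\[\vol_{d-1}\big(P_{\omega^\perp}(S_uK)\big)\ \leq\ \tfrac12\Big(\vol_{d-1}(P_{\omega^\perp}K)+\vol_{d-1}(P_{(R_u\omega)^\perp}K)\Big),\qquad\omega\in S^{d-1}.\]
Given this, monotonicity follows at once: because $x\mapsto x^{-d}$ is convex and decreasing on $(0,\infty)$, the bound implies $\vol_{d-1}(P_{\omega^\perp}S_uK)^{-d}\geq\tfrac12\big(\vol_{d-1}(P_{\omega^\perp}K)^{-d}+\vol_{d-1}(P_{(R_u\omega)^\perp}K)^{-d}\big)$, and integrating this over $\omega\in S^{d-1}$ --- where $\omega\mapsto R_u\omega$ is a measure-preserving involution of the sphere, so the two terms on the right contribute equally --- yields $\vol\Pi^*(S_uK)\geq\vol\Pi^*K$.

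The remaining task, where essentially all the work lies, is the pointwise projection bound. Fix $\omega\neq\pm u$ (the excluded directions form a null set), let $E$ be the $2$-plane spanned by $u$ and $\omega$, and set $W=E^\perp$, a $(d-2)$-dimensional subspace lying in both $\omega^\perp$ and $(R_u\omega)^\perp$ and fixed pointwise by $R_u$. Two observations cut the bound down to dimension two. First, since $u\in E$, Steiner symmetrization in the direction $u$ acts one slice at a time: $(S_uK)\cap(w+E)=S_u\big(K\cap(w+E)\big)$ for every $w\in W$, and $P_WK=P_W(S_uK)$. Second, since $\omega^\perp$ contains $W$ and meets $E$ in a line spanned by some unit vector $\omega'\perp\omega$ in $E$, Fubini over the orthogonal splitting $\omega^\perp=W\oplus(\omega^\perp\cap E)$ gives
\[\vol_{d-1}(P_{\omega^\perp}K)=\int_{P_WK}\operatorname{width}\big(K\cap(w+E);\,\omega'\big)\,dw,\]
and the analogous identities hold for $(R_u\omega)^\perp$ (with the perpendicular direction now $R_u\omega'$) and for $S_uK$. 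So the pointwise bound follows slice by slice from the planar inequality: for a convex body $L$ in a $2$-plane and a unit vector $\omega'$,
\[\operatorname{width}(S_uL;\,\omega')\ \leq\ \tfrac12\Big(\operatorname{width}(L;\,\omega')+\operatorname{width}(L;\,R_u\omega')\Big).\]
Parametrizing the $u$-chords of $L$ as $[l(y),r(y)]$ over the transverse coordinate $y$, so that the corresponding chord of $S_uL$ is $\big[-\tfrac12(r(y)-l(y)),\,\tfrac12(r(y)-l(y))\big]$, each of the three widths becomes a sum of two suprema over $y$ of linear combinations of $l(y)$, $r(y)$, and $y$, and the inequality reduces to the elementary fact that the maximum of an average is at most the average of the maxima.

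The main obstacle is the single-step monotonicity, and inside it the slicing that reduces the projection bound to the planar width inequality; the convergence of iterated Steiner symmetrizations to a ball and the Hausdorff-continuity of $K\mapsto\vol\Pi^*K$ are classical and will simply be quoted. (One could instead derive \cref{lem:petty} from the Busemann--Petty centroid inequality, but the Steiner-symmetrization argument sketched here is self-contained given those two standard facts.)
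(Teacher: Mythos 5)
This is a lemma the paper cites from Petty's original article and does not prove, so there is no in-paper proof to compare against; I'll evaluate your argument on its own.

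The slicing reduction and the planar width inequality are fine (your bound $\operatorname{width}(S_uL;\omega')\leq\tfrac12(\operatorname{width}(L;\omega')+\operatorname{width}(L;R_u\omega'))$ is a correct ``max of average $\leq$ average of maxes'' argument), and hence so is the pointwise arithmetic-mean bound
\[
\vol_{d-1}(P_{\omega^\perp}S_uK)\ \leq\ \tfrac12\bigl(\vol_{d-1}(P_{\omega^\perp}K)+\vol_{d-1}(P_{(R_u\omega)^\perp}K)\bigr).
\]
The gap is in the very next step. Writing $c\leq\tfrac12(a+b)$, you need $c^{-d}\geq\tfrac12(a^{-d}+b^{-d})$, i.e.\ $c\leq M_{-d}(a,b)$, where $M_{-d}$ is the $(-d)$-power mean. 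But your bound only gives $c\leq M_1(a,b)$, and the power-mean inequality gives $M_{-d}\leq M_1$, so $c\leq M_1(a,b)$ does \emph{not} imply $c\leq M_{-d}(a,b)$. The ``convex and decreasing'' remark actually cuts against you: from $c\leq\tfrac12(a+b)$ and $x\mapsto x^{-d}$ decreasing you get $c^{-d}\geq\bigl(\tfrac12(a+b)\bigr)^{-d}$, and Jensen's inequality for the \emph{convex} function $x\mapsto x^{-d}$ gives $\bigl(\tfrac12(a+b)\bigr)^{-d}\leq\tfrac12(a^{-d}+b^{-d})$ --- the opposite of what you want. Concretely, $a=1$, $b=3$, $c=2$ satisfies $c\leq\tfrac12(a+b)$, yet $c^{-d}=2^{-d}<\tfrac12(1+3^{-d})$ for $d\geq1$. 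Integrating your pointwise bound over the sphere via Jensen yields $\vol\Pi^*(S_uK)\geq\text{(something}\leq\vol\Pi^*K)$, which does not establish the monotonicity $\vol\Pi^*(S_uK)\geq\vol\Pi^*K$.

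A scalar bound on the support function of $\Pi(S_uK)$ is in fact too weak to run a Steiner-symmetrization proof. The standard routes are either (i) to deduce Petty's inequality by polarity from the Busemann--Petty centroid inequality (Lutwak's ``class reduction''), or (ii) to prove the set containment $S_u(\Pi^*K)\subseteq\Pi^*(S_uK)$ directly, which instantly gives monotonicity because Steiner symmetrization preserves volume. That containment is a statement about the $u$-chords of $\Pi^*K$ being no longer (and suitably placed relative to) those of $\Pi^*(S_uK)$; proving it requires a pair-of-chords argument rather than a pointwise estimate on $\vol_{d-1}(P_{\omega^\perp}\,\cdot\,)$, and it is genuinely stronger than your arithmetic-mean bound. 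So the Steiner-symmetrization skeleton is reasonable, but the ``convexity and decreasing'' bridge is incorrect and needs to be replaced by one of these arguments.
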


\noindent In fact, much more can be said: the volume of $\Pi^*K$ is tightly controlled by the minimal surface area of a linear image of $K$; see \cite[Theorem~3.6]{GP}. Petty's inequality is enough for our purposes.

\begin{proof}[Proof of \cref{prop:I-bound}] For each dimension $k\geq 1$, write $\gamma_k=\pi^{k/2}/(\frac k2)!$ for the volume of the $k$-dimensional unit ball.\footnote{For nonnegative real $x$, we use $x!$ as a shorthand for $\Gamma(x+1)$, where $\Gamma$ denotes the gamma function.} Let $r_k$ be the radius of a unit-volume Euclidean ball in $\RR^k$, so that $\gamma_kr_k^k=1$. Finally, let $B$ be the unit-volume Euclidean ball in $\RR^d$. By \cref{lem:bound-by-polarproj,lem:petty}, we have
\begin{align}
\vol(I_K)
&=\vol\big(\{x\in\RR^d:\vol(K\cap(K+x))>d^{-10}\big)\notag\\
&\leq(10\log d)^d\vol(\Pi^*K)\notag\\
&\leq(10\log d)^d\vol(\Pi^*B)\label{eq:IK-first-bound}.
\end{align}
So, it remains to compute $\vol(\Pi^*B)$. For any $x\in\RR^d$ of Euclidean norm $1$, the projection of $B$ onto $x^\perp$ is a $(d-1)$-dimensional ball of radius $r_d$, and so
\[h_{\Pi B}(x)=\gamma_{d-1}r_d^{d-1}.\]
As a result, $\Pi B$ is a Euclidean ball of radius $\gamma_{d-1}r_d^{d-1}$, and $\Pi^*B$ is a Euclidean ball of radius $(\gamma_{d-1}r_d^{d-1})^{-1}$. We conclude
\begin{equation}\label{eq:polarproj-bound}
\vol(\Pi^*B)=\frac{\gamma_d}{(\gamma_{d-1}r_d^{d-1})^d}=\frac{\gamma_d^d}{\gamma_{d-1}^d}=\left(\frac{\sqrt\pi\left(\frac{d-1}2\right)!}{\left(\frac d2\right)!}\right)^d\leq\left(\frac{2\pi}d\right)^{d/2},
\end{equation}
where we have used that $\gamma_{d-1}r_{d-1}^{d-1}=\gamma_dr_d^d=1$ and that
\[\frac{x!}{\left(x-\frac12\right)!}\geq\sqrt x\]
for every real $x\geq 0$, which follows from the log-convexity of the gamma function and the property $x!/(x-1)!=x$.
The result follows for $C_{\mathrm{int}}=200\pi$ by combining \eqref{eq:IK-first-bound} and \eqref{eq:polarproj-bound}.
\end{proof}

\begin{remark}\label{rmk:slicing} Instead of bounding $I_K$ inside a scaled image of $\Pi^*K$, one may obtain a cruder estimate by bounding $I_K$ inside a Euclidean ball, once a suitable linear transformation has been applied to $K$ (to obtain an \emph{isotropic} body).\footnote{As mentioned in the introduction, this was the argument given in an earlier version of the article.} With this strategy, one may get a bound of the form
\begin{equation}\label{eq:slicing-bound}
\vol(I_K)^{1/d}\lesssim L_K\cdot\log d\cdot \vol\big(B_{\|\cdot\|_2}(0,1)\big)^{1/d}\lesssim\frac{\log d\cdot L_K}{\sqrt d},
\end{equation}
where $L_K$ is the \emph{slicing constant} of $K$, the reciprocal of the largest volume of intersection of $K$ with any hyperplane. In our proof of \cref{prop:I-bound}, we were able to upper-bound all relevant quantities by the corresponding quantities for a Euclidean ball. However, $L_K$ is (nearly) minimized, rather than maximized, when $K$ is a Euclidean ball (see \cite[pg.~203--206]{SlicingSurvey}). The question of giving constant upper bound on $L_K$ is the \emph{Bourgain slicing problem} \cite{Bourgain86,Bourgain87,BallThesis}, which was recently resolved by Klartag and Lehec \cite{KlartagLehec2}, building on recent works \cite{Chen,KlartagLehec,JLV,Klartag23,Guan}. Combining this result with \eqref{eq:slicing-bound} is enough to show \cref{prop:I-bound} and thus \cref{thm:main}. (In fact, any bound of the form $L_K\lesssim \log^{O(1)}d$ is enough to prove \cref{thm:main}, albeit with an $o(1)$ term which is larger by a constant factor.)

We note that the Bourgain slicing problem is closely related to other questions in convex geometry, such as the Kannan--Lov\'asz--Simonovits conjecture \cite{KLS} and the thin-shell problem of Anttila, Ball, and Perissinaki \cite{ABP} . More examples of equivalent problems are listed in \cite[Section~5]{MP}, and more detail on the historical context of and progress on the slicing problem is given in the survey article \cite{SlicingSurvey}.
\end{remark}

\section*{Acknowledgements}

The author would like to thank Jacob Fox for many helpful conversations and suggestions, Assaf Naor for communicating this proof of \cref{prop:I-bound}, and Dmitrii Zakharov for mentioning the connection between packing the simplex and the cross-polytope.

\bibliographystyle{alpha}
\bibliography{sphere-packing-arbitrary-norms.bib}

\end{document}